\numberwithin{equation}{section}
\newtheorem{lem}{Lemma}
\newtheorem{thm}{Theorem}
\begin{document}

\begin{large}
\centerline{\Large \bf A hybrid inequality for the number}
\centerline{\Large \bf of divisors of an integer}

\end{large}
\vskip 10pt
\begin{large}
\centerline{\sc  Patrick Letendre}
\end{large}
\vskip 10pt
\begin{abstract}
We establish an explicit inequality for the number of divisors of an integer $n$. It uses the size of $n$ and its number of distinct prime divisors.
\end{abstract}
\vskip 10pt
\noindent AMS Subject Classification numbers: 11N37, 11N56.

\noindent Key words: number of divisors function.

\section{Introduction and notation}

Let $\tau(n)$ and $\omega(n)$ be respectively the number of divisors and the number of distinct prime factors of $n$. In \cite{jmdk:pl}, the author and De Koninck have studied a variety of inequalities for the $\tau$ function. Among many helpful comments and suggestions, the anonymous referee of the said paper asked to justify and clarify some preliminary statements concerning the quality of our inequalities when compared to the well-known theorem of Wigert \cite{sw}
$$
\tau(n) \le 2^{\frac{\log n}{\log\log n}(1+o(1))} \quad n \rightarrow \infty.
$$
The present author has therefore reworked some of his results to get to a nice statement which is also inspired by Théorème 2 of \cite{pe:jln} and by \cite{jln:gr}. Let us define $\vartheta:=\vartheta(n)$ by $\omega(n) = \vartheta\frac{\log n}{\log\log n}$ for each $n \ge 16$. Then,
$$
\tau(n) \le \exp\biggl(\vartheta\log\biggl(1+\frac{1}{\vartheta}\biggr)\frac{\log n}{\log\log n}\biggl(1+O\biggl(\frac{\log\log\log n}{\log\log n}\biggr)\biggr)\biggr).
$$
In this paper, we make this result explicit and the best possible constant, here implicit, is found. A more precise result, not mentioned in \cite{jmdk:pl}, is also obtained in Theorem \ref{thm_2}.

Let us define the function
\begin{equation}\label{rho}
\rho(n) := \biggl(\frac{\log \tau(n)}{\omega(n)\log\bigl(1+\frac{1}{\vartheta(n)}\bigr)}-1\biggr)\frac{\log\log n}{\log\log\log n}.
\end{equation}

\begin{thm}\label{thm_1}
For each integer $n \ge 17$,
$$
\rho(n) \le \rho(2^{26}3^{16}) = 2.00080128 \dots
$$
The inequality is strict unless $n = 2^{26}3^{16}$.
\end{thm}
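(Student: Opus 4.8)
The plan is to reduce the inequality to a finite verification together with the optimization of $\rho$ over two explicit infinite families, and then to locate the maximum precisely.

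\textbf{Step 1 — reduction to a canonical shape.} Note that $\rho(n)$ depends on $n$ only through $k := \omega(n)$, $L := \log n$ and $t := \tau(n)$, and that for $k$ and $L$ fixed it is strictly increasing in $t$. Hence $\tau(n) \le M_k(L) := \max\{\tau(m) : \omega(m) = k,\ \log m \le L\}$, and the extremal $m$ realizing $M_k(L)$ may be taken of the form $m = 2^{a_1}3^{a_2}\cdots p_k^{a_k}$ with $a_1 \ge a_2 \ge \cdots \ge a_k \ge 1$ (replacing the primes of $m$ by the first $k$ primes and sorting the exponents in non-increasing order leaves $\omega$ and $\tau$ unchanged while not increasing $\log m$). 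Writing $\rho^\ast(k,L)$ for the value of $\rho$ computed with $\tau$ replaced by $M_k(L)$, we get $\rho(n) \le \rho^\ast(k,\log n)$ for every $n$, with equality exactly when $n$ has the canonical shape above and is $\tau$-maximal for its size among integers with $k$ prime factors — call such an $n$ a \emph{$k$-champion}. Since $M_k(\cdot)$ is a non-decreasing step function whose jumps occur at the $k$-champions, and $\rho^\ast$ is increasing in $\tau$, on each interval between consecutive $k$-champions $\rho^\ast(k,\cdot)$ is bounded by the values at the two endpoint champions provided $L \mapsto \rho^\ast(k,L)$ has no interior maximum there; it therefore essentially suffices to bound $\rho(n)$ over the $k$-champions and, in the delicate ranges, to control $\rho^\ast(k,\cdot)$ on the short inter-champion intervals as well.

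\textbf{Step 2 — large $\omega$.} For all $k$ beyond some explicit small threshold $k_0$, I would show $\rho^\ast(k,L) < 2$ for every admissible $L$. Here one uses the identity $\log M_k(L) = \max\sum_{i=1}^k\log(a_i+1)$ subject to $\sum_{i=1}^k a_i\log p_i \le L$, together with explicit Chebyshev-type bounds for $\theta(p_k)$ and $p_k$ (Rosser–Schoenfeld type), and the first-order observation that $\rho^\ast(k,L) < 0$ once $\vartheta$ is small (few prime factors relative to $\log n$), while $\rho^\ast(k,L)$ stays bounded away from $2$ in the complementary range. This leaves only the finitely many values $k \in \{1,\dots,k_0-1\}$.

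\textbf{Step 3 — small $\omega$.} For each remaining $k$ the $k$-champions form an infinite family parametrized by their exponents, and, as the asymptotics show, $\rho$ along it equals $1 + o(1)$ at both ends with a single interior hump. For $k=1$ one studies $\rho(2^a)$ as a function of $a$ and checks its maximum is $< \rho(2^{26}3^{16})$. For $k=2$ the champions are the $\tau$-minimal numbers $2^{a_1}3^{a_2}$ with $a_1\ge a_2\ge 1$ (so $a_1/a_2$ near $\log 3/\log 2$); one locates the maximum of $\rho(2^{a_1}3^{a_2})$ over the admissible pairs, pinning it at $(a_1,a_2)=(26,16)$ with value $2.00080128\dots$, and verifies that $L\mapsto\rho^\ast(2,L)$ does not exceed this value between consecutive $2$-champions. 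Any intermediate $k$ with $k_0>3$ is handled the same way. The small champions with $17\le n < e^{e^e}$ (where $\log\log\log n$ is small, so the auxiliary factors behave differently) are finitely many and are checked directly. Collecting Steps 2–3 gives the inequality, and strictness follows because the maximum of the underlying discrete optimization is attained uniquely at $2^{26}3^{16}$.

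The main obstacle is the $k=2$ optimization of Step 3. Because the governing factor is $\frac{\log\log n}{\log\log\log n}$, the convergence of $\rho$ to its limit $1$ along the $2$-champions is extremely slow, so purely asymptotic estimates are useless: one needs genuinely sharp, fully explicit inequalities valid for \emph{all} champions, and one must rule out over an infinite two-integer-parameter family that any champion beyond $2^{26}3^{16}$ — where $\rho\approx 2.0008$ but still well above $1$ — does better, and that the inter-champion interpolation $\rho^\ast(2,\cdot)$ does not overshoot either. Quantitatively controlling that single hump, rather than merely describing it qualitatively, is the heart of the proof.
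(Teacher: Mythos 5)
Your outline reproduces the paper's overall architecture (reduce to primary integers, split according to $k=\omega(n)$, treat large $k$ analytically, finish small $k$ by a computation that pins the maximum at $2^{26}3^{16}$), but as written it is a plan rather than a proof, and the places where it is vague are exactly the places where all the work lies. First, your Step 1 hinges on the clause ``provided $L \mapsto \rho^\ast(k,L)$ has no interior maximum'' between consecutive champions; this is not automatic, since as $L$ grows with $\tau$ fixed the factor $\log(1+1/\vartheta)$ increases while $\frac{\log\log n}{\log\log\log n}$ also increases, so the two effects compete. The paper resolves precisely this point with Lemma \ref{lem_3} (monotonicity of $\log(1+\frac{e^x}{kx})(1+\frac{c\log x}{x})$ in $x$), which legitimizes the reduction to primary integers, and then controls the resulting function $h_k(x)$ of the continuous variable $x=\log\log n$ by an explicit derivative bound plus a grid evaluation (Lemma \ref{lem_6}). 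Without some such device your passage from the discrete set of champions to all $n$ does not close.

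Second, your Step 2 underestimates the large-$k$ range. The statement ``for all $k$ beyond some explicit small threshold $k_0$'' is not realizable with a small $k_0$: in the paper the purely analytic argument (via the bound $\tau(n)<(1+\frac{\log n}{k\log k})^k$ of Lemma \ref{lem_5}, split into three subcases according to the size of $t=\log(1/\vartheta)$) only works for $k\ge 11000$, and the entire range $44\le k\le 10999$ requires Ramanujan's inequality \eqref{ram}, the explicit estimates \eqref{lem2-1}--\eqref{lem2-4}, a localization $x\in[\log k,1.8\log k]$, and a computer verification on a grid of mesh $0.002$ justified by the derivative bound \eqref{maj:der}. A ``first-order observation'' does not bridge this. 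Third, and most importantly, you yourself identify the $k=2$ optimization as the heart of the matter and then do not carry it out: the paper does so by first proving that any $n$ with $\rho(n)\ge 2$ must have $x=\log\log n\le 9.36$ (inequality \eqref{s4,i1}), then running the grid argument with mesh $0.00004$ on $[4,9.36]$ and enumerating the numbers $2^a3^b>10^9$ with $a\le 77$, $b\le 49$ for $x\le 4$. Until those explicit bounds and finite verifications are actually supplied, the claimed maximum at $2^{26}3^{16}$ and the strictness assertion remain unproved.
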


Let $\mathcal{K}$ be the convex hull of the set $\bigl\{\bigl(\frac{1}{s},\frac{\log(s+1)}{s}\bigr) :\ s\in\mathbb{N}\bigr\} \cup \{(0,0)\}$. We thus consider the function $f:[0,1] \rightarrow \mathbb{R}$ defined by
$$
f(\vartheta):=\max \{(\vartheta,z) :\ z \in \mathbb{R}\} \cap \mathcal{K}.
$$

\begin{thm}\label{thm_2}
Let $\vartheta \in (0,1]$ be fixed. Let $\mathcal{J}$ be the ordered sequence of integers $n$ satisfying
$$
\Bigl|\omega(n) - \frac{\vartheta \log n}{\log\log n}\Bigr| < \frac{\log n}{(\log\log n)^{3/2}}.
$$
We have
$$
\limsup_{\substack{n \rightarrow \infty \\ n \in \mathcal{J}}} \frac{\log \tau(n)\log\log n}{\log n} = f(\vartheta).
$$
\end{thm}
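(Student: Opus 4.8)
The plan is to establish the two inequalities $\limsup\ge f(\vartheta)$ and $\limsup\le f(\vartheta)$ separately.

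\textbf{The lower bound.} I would exhibit an explicit sequence in $\mathcal J$ realising the value. Fix the integer $s$ with $\vartheta\in[\frac1{s+1},\frac1s]$ and put $c:=(s+1)-\frac1\vartheta\in[0,1]$, so that $\vartheta c=\vartheta(s+1)-1$ and $\vartheta(1-c)=1-\vartheta s$. For $y\to\infty$ take $n_y:=\bigl(\prod_{p\le cy}p\bigr)^{s}\bigl(\prod_{cy<p\le y}p\bigr)^{s+1}$ (reading $c=1$ as "all exponents $s$" and $c=0$ as "all exponents $s+1$"). Writing $\theta(x):=\sum_{p\le x}\log p$, one has $\omega(n_y)=\pi(y)$, $\log n_y=(s+1)\theta(y)-\theta(cy)$, $\log\tau(n_y)=\pi(cy)\log(s+1)+(\pi(y)-\pi(cy))\log(s+2)$. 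The prime number theorem gives $\log n_y=\frac y\vartheta(1+o(1))$ and $\log\log n_y=\log y+O(1)$, whence a short computation yields $\bigl|\omega(n_y)-\vartheta\frac{\log n_y}{\log\log n_y}\bigr|=O(y/\log^2 y)=o\bigl(\log n_y/(\log\log n_y)^{3/2}\bigr)$, so $n_y\in\mathcal J$ for $y$ large, and
$$
\frac{\log\tau(n_y)\log\log n_y}{\log n_y}\;\longrightarrow\;\vartheta\bigl(c\log(s+1)+(1-c)\log(s+2)\bigr).
$$
This limit equals $f(\vartheta)$: the upper boundary of $\mathcal K$ over $[\frac1{s+1},\frac1s]$ is the edge joining the vertices $\bigl(\frac1s,\frac{\log(s+1)}s\bigr)$ and $\bigl(\frac1{s+1},\frac{\log(s+2)}{s+1}\bigr)$ — these points are in convex position because the slopes $(s+1)\log(s+1)-s\log(s+2)$ are strictly increasing in $s$ — and the displayed expression is precisely the height of that edge at $x=\vartheta$.

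\textbf{The upper bound.} Fix a large $n\in\mathcal J$, write $n=\prod_{j=1}^{k}p_j^{a_j}$ with $k=\omega(n)$, and set $L=\log n$, $\ell=\log\log n$; by definition of $\mathcal J$, $k=\vartheta L/\ell+O(L/\ell^{3/2})$. The crucial point — and the reason the piecewise-linear $f(\vartheta)$ appears rather than the smooth envelope $\vartheta\log(1+\frac1\vartheta)$ of Theorem~1 — is that the exponents are integers, so I would bound $\log(1+a)$ by a \emph{secant} of $t\mapsto\log(1+t)$ rather than by a tangent. For an integer $s\ge1$ let $\alpha_s+\beta_s t$ be the secant through $t=s,s+1$, so $\beta_s=\log\frac{s+2}{s+1}>0$, $\alpha_s=\log(s+1)-s\beta_s\ge0$; by concavity $\log(1+a)\le\alpha_s+\beta_s a$ for \emph{every} integer $a\ge1$ (the inequality can fail only on $(s,s+1)$, which contains no integer). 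Split the primes of $n$ at $z:=L^{1/C}$ with $C:=1+\ell^{-1/2}$. At most $\pi(z)=O(Le^{-\sqrt\ell}/\ell)$ primes lie below $z$, each contributing $\log(1+a_j)\le\log(1+L/\log2)=\ell+O(1)$, hence $o(L/\ell)$ in total. For $p_j\ge z$ one has $\log p_j\ge\ell/C$, so $a_j\le\frac C\ell\,a_j\log p_j$, and the secant bound gives $\sum_{p_j\ge z}\log(1+a_j)\le\alpha_s k+\frac{\beta_s C}\ell\sum_{p_j\ge z}a_j\log p_j\le\alpha_s k+\frac{\beta_s C}\ell L$. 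Adding and inserting the value of $k$ and $C=1+o(1)$ yields $\log\tau(n)\le\frac L\ell(\alpha_s\vartheta+\beta_s)+o(L/\ell)$, so $\limsup_{n\in\mathcal J}\frac{\log\tau(n)\log\log n}{\log n}\le\alpha_s\vartheta+\beta_s$ for every integer $s\ge1$.

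To conclude I would check $\min_{s\ge1}(\alpha_s\vartheta+\beta_s)=f(\vartheta)$. Dividing the secant inequality at $t=m$ by $m$ shows that the affine function $x\mapsto\alpha_s x+\beta_s$ has value $\beta_s\ge0$ at $x=0$ and value $\ge\frac{\log(m+1)}m$ at $x=\frac1m$ for every $m\ge1$, i.e. it dominates all the generators of $\mathcal K$ and hence dominates $\mathcal K$; therefore $\alpha_s\vartheta+\beta_s\ge f(\vartheta)$ for all $s$. On the other hand, for the $s$ with $\vartheta\in[\frac1{s+1},\frac1s]$ one has $\alpha_s\vartheta+\beta_s=(\vartheta(s+1)-1)\log(s+1)+(1-\vartheta s)\log(s+2)$, which is exactly the height of the boundary edge of $\mathcal K$ computed above, i.e. $f(\vartheta)$. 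Combined with the lower bound this proves the theorem.

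\textbf{Main obstacle.} The delicate part is the upper bound, and really the single idea that makes it work: one must exploit the integrality of the exponents, since relaxing to real exponents only reproduces the weaker constant $\vartheta\log(1+\frac1\vartheta)$, which strictly exceeds $f(\vartheta)$ whenever $\vartheta\notin\{1,\frac12,\frac13,\dots\}$. Replacing the tangent-line estimate by the secant-line estimate $\log(1+a)\le\alpha_s+\beta_s a$ is what recovers the polygonal $f$; the secondary difficulty is calibrating the split point $z$ so that it is close enough to $L$ that the factor $C$ does not degrade the secant bound, yet far enough that the few small primes contribute only $o(L/\ell)$. The remaining steps are routine prime-number-theorem estimates.
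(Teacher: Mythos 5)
Your proof is correct, and the upper bound runs on a genuinely different mechanism than the paper's. The paper first reduces to primary integers, then uses an exchange argument (replacing the largest prime factor whose exponent exceeds $s+1$ by a fresh prime, and tracking the effect on $\tau$, on the size of $n$ and on $\omega$) to show that a near-extremal $n \in \mathcal{J}$ must have almost all of its exponents concentrated in $\{s,s+1\}$ outside a negligible block of small primes, and only then solves the resulting linear optimization in the exponent counts $k_j$ to land on $f(\vartheta)$. Your secant inequality $\log(1+a)\le\alpha_s+\beta_s a$, valid for every integer $a\ge 1$ precisely because no integer lies in $(s,s+1)$, is in effect the linear-programming dual certificate for that same optimization: it applies to every $n$ at once, with the constraints $\sum_j a_j\log p_j=\log n$ and $\omega(n)=\vartheta\log n/\log\log n+O(\log n/(\log\log n)^{3/2})$ feeding directly into it, so the reduction to primary integers and the exchange step disappear. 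The price is the split at $z=L^{1/C}$ needed to get $\log p_j\ge \ell/C$ for the primes that matter, and your calibration is right: $\pi(z)\,\ell=o(L/\ell)$ while $C=1+\ell^{-1/2}$ perturbs the main term only by $O(L/\ell^{3/2})$. Your lower-bound construction is essentially the paper's ($m_1^{s+1}m_2^{s}$ with two blocks of primes sized to hit the prescribed $\omega$-to-$\log n$ ratio; you assign the exponents $s$ and $s+1$ to the opposite blocks, which is immaterial asymptotically since both choices give the same value of the ratio), and your identification of $\alpha_s\vartheta+\beta_s$ with the height of the edge of $\mathcal{K}$ joining the $s$-th and $(s+1)$-st generators — together with the monotonicity of the slopes $(s+1)\log(s+1)-s\log(s+2)$, which follows from $(s+2)^2>(s+1)(s+3)$ — correctly matches the two bounds to $f(\vartheta)$. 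The one point worth writing out in full is the domination step $g_s\ge f$ on $[0,1]$: it needs both $\beta_s\ge 0$ at the generator $(0,0)$ and the secant inequality at every integer $m$, exactly as you state, since an affine function majorizing all generators majorizes the (closed) convex hull.
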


For each integer $k \ge 0$ we define $n_k$ by $n_k=p_1 \cdots p_k$ (so that $n_0=1$) where $p_k$ is the $k$-th prime number. We say that an integer is {\it primary} if it can be written as
$$
n=p_1^{\alpha_1}\cdots p_k^{\alpha_k} \qquad \alpha_1 \ge \cdots \ge \alpha_k
$$
for some $k \ge 0$.

\section{Preliminary lemmas}

A well-known consequence of the prime number theorem is that
$$
\max_{n \le z}\omega(n)=\frac{\log z}{\log\log z}+O\Bigl(\frac{\log z}{(\log\log z)^2}\Bigr).
$$
We need an explicit upper bound for our result.

\begin{lem}\label{lem_1}
For each integer $n \ge 3$ we have
$$
\omega(n) \le \kappa\frac{\log n}{\log\log n}
$$
where $\kappa=1.3840127\dots$ The inequality is strict unless $n=n_9$.
\end{lem}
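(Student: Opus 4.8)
The plan is to reduce the claim to a finite check over "extremal" integers. First I would observe that, for fixed $\omega(n)=k$, the ratio $\omega(n)\log\log n/\log n = k\log\log n/\log n$ is a decreasing function of $n$ once $\log n$ is large enough (precisely, once $\log n > e$), so among all $n$ with exactly $k$ distinct prime factors the supremum of $k\log\log n/\log n$ is attained at the smallest such $n$, namely $n_k = p_1\cdots p_k$. Hence it suffices to prove
$$
\omega(n_k) = k \le \kappa\,\frac{\log n_k}{\log\log n_k}
$$
for every $k\ge 1$ (with strict inequality except at $k=9$), together with a direct check for the finitely many $n\ge 3$ whose $\log n$ falls below the threshold where monotonicity kicks in. This converts the problem into a one-variable question about the sequence $\log n_k = \theta(p_k)$, the Chebyshev function.

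Next I would split into small and large $k$. For small $k$ — say $k \le K_0$ for an explicit $K_0$ (the value $9$ where the maximum occurs must of course lie in this range) — one simply computes $k\log\log n_k/\log n_k$ numerically for each $k$ and verifies that the maximum is $k=9$, giving the constant $\kappa = 9\log\log n_9/\log n_9 = 1.3840127\dots$. For large $k$ I would invoke explicit estimates for $\theta(p_k)$ and for $p_k$ itself (Rosser–Schoenfeld / Dusart-type bounds, e.g. $\theta(p_k) > p_k(1 - c/\log p_k)$ and $p_k > k\log k$ for $k$ beyond a small bound), which give a lower bound for $\log n_k$ of size roughly $k\log k$ and an upper bound for $\log\log n_k$ of size roughly $\log k$. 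Plugging these in, $k\log\log n_k/\log n_k$ is bounded above by something like $(\log k + O(\log\log k))/(\log k + O(1)) \to 1 < \kappa$, and with the explicit constants one shows this quantity is $< 1.38$ for all $k \ge K_0$. Choosing $K_0$ large enough that the asymptotic bound is already below $\kappa$, and small enough that the finite check below $K_0$ is feasible, closes the gap.

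The main obstacle is the transition region: one must pick $K_0$ so that the explicit large-$k$ inequality genuinely holds from $K_0$ onward (this requires care with the error terms in the $\theta(p_k)$ and $p_k$ estimates, since the convergence to the limit $1$ is slow, like $1/\log\log k$), while keeping $K_0$ small enough that verifying $k\log\log n_k/\log n_k < \kappa$ for $k < K_0$, $k\ne 9$, together with $=\kappa$ at $k=9$, is a routine though possibly lengthy computation. A secondary technical point is the monotonicity step: one should state precisely for which range of $\log n$ the function $t \mapsto \log\log t/\log t$ is decreasing, handle the few integers $3 \le n$ with $\log\log n \le 0$ or with $\log n$ below that range separately, and confirm that none of them beats $n_9$. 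Once these two finite verifications and the one explicit asymptotic bound are in place, the lemma follows, with the uniqueness of the extremal integer $n_9$ coming out of the strict inequalities in the finite check.
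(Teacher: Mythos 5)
Your proposal is correct and follows essentially the route the paper itself indicates: the paper's proof is a citation to Robin together with the remark that the lemma follows from the bound $\log n_k \ge k(\log k+\log\log k-5/4)$ in \eqref{lem2-5} (itself derived from $p_k \ge k\log k$) plus finite computations, which is exactly your reduction to $n_k$ via monotonicity of $\log\log t/\log t$, an explicit Chebyshev-type lower bound for large $k$, and a numerical check locating the maximum at $k=9$.
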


\begin{proof}
See \cite{gr}. It is also a consequence of \eqref{lem2-5} below along with some computations.
\end{proof}

\begin{lem}
We have
\begin{eqnarray}\label{lem2-1}
\sum_{j=1}^{k} \log\log p_j & \ge & k\log\log k \quad k \ge 44,\\ \label{lem2-2}
\log n_k & \le & 2k\log k \quad k \ge 2,\\ \label{lem2-3}
\log\log n_k & \ge & \log k \quad k \ge 3,\\ \label{lem2-4}
\log n_k & \le & k\log\log n_k \quad k \ge 3.
\end{eqnarray}
\end{lem}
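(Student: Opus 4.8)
The four inequalities all concern the quantity $\log n_k=\theta(p_k)$, where $\theta(x)=\sum_{p\le x}\log p$ is Chebyshev's function, together with the size of the $k$-th prime. My plan is to reduce each of them to a statement about $\theta(p_k)$ and $p_k$, invoke explicit prime estimates — the Rosser--Schoenfeld bounds $k\log k<p_k<k(\log k+\log\log k)$ (the upper one for $k\ge6$), Dusart's refinements $p_k\ge k(\log k+\log\log k-1)$ and $p_k\le k(\log k+\log\log k-0.9)$ for $k$ large, Chebyshev's inequality $\theta(x)\le\pi(x)\log x$, and explicit two-sided bounds such as $\theta(x)<x$ in the relevant range and $\theta(x)>(1-\varepsilon)x$ — and in each case prove the asymptotic form with an explicit threshold, leaving a finite range to be settled by direct computation. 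The easy two go quickly. For \eqref{lem2-2}: $\log n_k=\theta(p_k)\le k\log p_k$, and for $k\ge6$ the Rosser--Schoenfeld bound gives $\log p_k<\log k+\log(\log k+\log\log k)\le2\log k$, the last step amounting to $\log k+\log\log k\le k$; the cases $k=2,3,4,5$ are immediate. For \eqref{lem2-3}, which is equivalent to $\log n_k\ge k$, use $p_j\ge j+1$ to get $\log n_k\ge\sum_{j=1}^{k}\log(j+1)=\log((k+1)!)$, then induct on $k$: the base case $k=3$ reads $\log24\ge3$, and the induction step holds because $\log(k+2)\ge1$ for $k\ge1$.

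For \eqref{lem2-4}, write the claim as $\theta(p_k)/\log\theta(p_k)\le k$. Since $t\mapsto t/\log t$ is increasing for $t>e$, it suffices to produce an upper bound $\theta(p_k)\le u_k$ with $u_k/\log u_k\le k$. Taking $u_k=k(\log k+\log\log k-0.9)$, the bound $\theta(p_k)\le p_k\le u_k$ holds for $k$ large by Dusart, while $u_k/\log u_k\le k$ reduces to $\log\log k-0.9\le\log(\log k+\log\log k-0.9)$, which holds once $\log\log k\ge0.9$ because the right-hand side then exceeds $\log\log k$. (If one prefers to avoid the range where $\theta(x)<x$ is not yet verified, replace $\theta(p_k)\le p_k$ by $\theta(p_k)<1.0001\,p_k$ and absorb the loss into the constant.) A direct computation settles the remaining finitely many $k\ge3$.

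The real work is \eqref{lem2-1}. Separating the two terms $j=1,2$ (where $j\log j<e$) and using $p_j\ge j\log j$, I would bound
\[
\sum_{j=1}^{k}\log\log p_j\ \ge\ \log\log2+\log\log3+\sum_{j=3}^{k}\Bigl(\log\log j+c\,\tfrac{\log\log j}{\log j}\Bigr),
\]
where $\log\bigl(1+\tfrac{\log\log j}{\log j}\bigr)\ge c\,\tfrac{\log\log j}{\log j}$ with an explicit $c$ (the argument stays in $[0,1/e]$, so $c=0.85$ is admissible), possibly sharpening the input to Dusart's $p_j\ge j(\log j+\log\log j-1)$ to save on constants. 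Then I would compare $\sum_{j=3}^{k}\log\log j$ and $\sum_{j=3}^{k}\tfrac{\log\log j}{\log j}$ to their integrals with explicit error terms, both integrands being eventually monotone, using $\int\log\log t\,dt=t\log\log t-\mathrm{li}(t)$ and $\int\tfrac{\log\log t}{\log t}\,dt=\tfrac{t\log\log t}{\log t}+\int\tfrac{\log\log t-1}{\log^{2}t}\,dt$. This yields $\sum_{j\le k}\log\log p_j\ge k\log\log k+\tfrac{k}{\log k}\bigl(c\log\log k-1\bigr)-\mathrm{li}(k)+O(1)$, which is $\ge k\log\log k$ once $k$ exceeds an explicit threshold $k_0$, since $c\log\log k-1$ eventually dominates the term $\mathrm{li}(k)\asymp k/\log k$. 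The remaining range $44\le k<k_0$ is then checked by direct computation of the partial sums.

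The delicate point throughout \eqref{lem2-1} is exactly that the inequality is asymptotically tight to relative order $1/\log k$: the ``winning'' quantity $\tfrac{k}{\log k}(c\log\log k-1)$ must beat the $\mathrm{li}(k)$-type correction, so the choice of $c$, the particular explicit prime bound used, and all the $O(1)$ pieces coming out of the integral comparisons must be tracked carefully — both to keep $k_0$ small enough that the finite verification is feasible and to land precisely at the stated threshold $44$. I expect honest bookkeeping to force $k_0$ into the range of a few hundred to a few thousand, with everything below handled numerically; this combination of a tight correction term and a nontrivial finite check is the main obstacle.
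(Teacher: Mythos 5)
Your route is genuinely different from the paper's: the paper disposes of \eqref{lem2-1} and \eqref{lem2-2} by citing Lemma 4.8 of De Koninck--Letendre, proves \eqref{lem2-3} by induction, and obtains \eqref{lem2-4} by first establishing $\log n_k \ge k(\log k+\log\log k-5/4)$ (inequality \eqref{lem2-5}) by induction from $p_k\ge k\log k$, then comparing the resulting lower bound for $k\log\log n_k$ with a cited explicit upper bound for $\log n_k$. Your self-contained arguments for \eqref{lem2-2} and \eqref{lem2-3} are correct, and your plan for \eqref{lem2-1} --- extract the secondary term $c\sum_{j}\frac{\log\log j}{\log j}\gtrsim c\,\frac{k\log\log k}{\log k}$ from $p_j\ge j\log j$, check that it beats the $\mathrm{li}(k)\sim k/\log k$ loss in the sum-to-integral comparison of $\sum_j\log\log j$, and finish the range $44\le k<k_0$ numerically --- is sound and correctly identifies the tightness of the inequality; it amounts to a from-scratch proof of the input the paper imports. (The displayed formula $\frac{k}{\log k}(c\log\log k-1)-\mathrm{li}(k)$ double-counts the $k/\log k$ correction, but that only affects bookkeeping, not feasibility.)

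There is, however, a genuine gap in your treatment of \eqref{lem2-4}. The step $\theta(p_k)\le p_k$ (with $\theta$ Chebyshev's function, so $\theta(p_k)=\log n_k$) is not a theorem: $\theta(x)-x$ changes sign infinitely often, so $\theta(p_k)>p_k$ for arbitrarily large $k$. Your fallback $\theta(x)<1.0001\,x$ is true for all $x$ (with an even smaller constant), but the loss cannot be ``absorbed into the constant'': it adds $\varepsilon_0 p_k\asymp \varepsilon_0 k\log k$ to $u_k$, while the test $u_k/\log u_k\le k$, written with $u_k=k\lambda_k$, reads $\lambda_k-\log k\le\log\lambda_k$, i.e.\ $\lambda_k\le \log k+\log\log k+O\bigl(\frac{\log\log k}{\log k}\bigr)$; an additive error of size $\varepsilon_0\log k$ in $\lambda_k$ therefore overwhelms the $0.9$ of slack once $\varepsilon_0\log k>0.9$, so the patched argument fails for all sufficiently large $k$ (roughly $\log k\gtrsim 10^4$ with your constant). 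To repair it you need an upper bound on $\theta(p_k)$ itself of the form $k(\log k+\log\log k+o(1))$ --- for instance combine $p_k\le k(\log k+\log\log k-0.9)$ with a bound such as $\theta(x)\le x+0.2x/\log^2x$ valid for large $x$, or quote an explicit estimate for $\theta(p_k)$ directly --- or else argue as the paper does, plugging \eqref{lem2-5} into $k\log\log n_k$ to get $k\log\log n_k\ge k\log k+k\log(\log k+\log\log k-5/4)$ and comparing with an explicit upper bound $\log n_k\le k(\log k+\log\log k)$, which avoids any appeal to $\theta(x)\le x$.
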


\begin{proof}
\eqref{lem2-1} and \eqref{lem2-2} are simple consequences of Lemma 4.8 of \cite{jmdk:pl} in (4.22) and (4.20) respectively. \eqref{lem2-3} is done by induction. For \eqref{lem2-4}, we first establish the inequality
\begin{equation}\label{lem2-5}
\log n_k  \ge  k(\log k+\log\log k-5/4) \quad k \ge 2
\end{equation}
with induction by using the fact that $p_k \ge k\log k$ from \cite{jbr}. So, we get a lower bound for $k\log\log n_k$ with \eqref{lem2-5} and an upper bound for $\log n_k$ with (4.20) from Lemma 4.8 of \cite{jmdk:pl}. We let the details to the reader.
\end{proof}

\begin{lem}\label{lem_3}
For each fixed $k \in \mathbb{N}$ and $c \in \mathbb{R}_{>0}$, the function
\begin{equation}\label{ins_f}
\log\Bigl(1+\frac{\exp(x)}{kx}\Bigr)\Bigl(1+\frac{c\log x}{x}\Bigr)
\end{equation}
is strictly increasing for $x \ge 1$.
\end{lem}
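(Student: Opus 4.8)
The plan is to show that the logarithmic derivative of the function in \eqref{ins_f} is positive for $x \ge 1$, or equivalently that the derivative of its logarithm is positive. Write $g(x) = \log\bigl(1+\frac{\exp(x)}{kx}\bigr)$ and $h(x) = 1 + \frac{c\log x}{x}$, so that the function is $g(x)h(x)$. First I would observe that $h(x) > 0$ for all $x \ge 1$ (indeed $h(1) = 1$ and $h$ stays positive since $\log x \ge 0$ there), and likewise $g(x) > 0$ since $1 + \frac{\exp(x)}{kx} > 1$. Hence it suffices to prove $(gh)'(x) = g'(x)h(x) + g(x)h'(x) > 0$.

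The second step is to compute each piece. We have
$$
g'(x) = \frac{\frac{d}{dx}\bigl(\frac{\exp(x)}{kx}\bigr)}{1+\frac{\exp(x)}{kx}} = \frac{\exp(x)(x-1)}{kx^2 + x\exp(x)} = \frac{\exp(x)(x-1)}{x\bigl(kx + \exp(x)\bigr)},
$$
which is $\ge 0$ for $x \ge 1$ and strictly positive for $x > 1$. Also
$$
h'(x) = c\,\frac{1 - \log x}{x^2},
$$
so $h'(x) \ge 0$ for $1 \le x \le e$ and $h'(x) < 0$ for $x > e$. On the range $1 \le x \le e$ both $g'h \ge 0$ and $gh' \ge 0$, and $g'(x) > 0$ for $x>1$ while at $x=1$ one checks $g(1)h'(1) = c\log(1+\frac{e}{k}) > 0$; so $(gh)'(x) > 0$ throughout $[1,e]$.

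The remaining and main case is $x > e$, where $h'(x) < 0$ and we genuinely need the growth of $g$ to dominate. Here I would bound things from below and above: since $g$ is increasing, $g(x) \ge g(e) = \log(1+\frac{e}{ke})= \log(1+\frac1k)$ is bounded, but more usefully, for $x \ge 1$ one has the crude estimate $g(x) = \log\bigl(1+\frac{\exp(x)}{kx}\bigr) \le x - \log(kx) + \frac{kx}{\exp(x)} \le x$ ... actually the cleaner route is to note $g(x) \le x + \log\bigl(\frac{1}{kx} + \exp(-x)\bigr) \le x$ for $x$ large, while from below $g(x) \ge x - \log(kx)$. Then
$$
(gh)'(x) = g'(x)h(x) + g(x)h'(x) \ge \frac{\exp(x)(x-1)}{x(kx+\exp(x))}\cdot 1 \; - \; x\cdot \frac{c(\log x - 1)}{x^2}.
$$
For $x \ge e$ we have $\frac{\exp(x)(x-1)}{kx + \exp(x)} \ge \frac{x-1}{1 + kx\exp(-x)} \to x-1$, and more precisely this quantity exceeds, say, $x/2$ once $x$ is moderately large; meanwhile the negative term is $\frac{c(\log x-1)}{x} \to 0$. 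So the positive term grows linearly in $x$ while the negative term tends to $0$, giving $(gh)'(x)>0$ for all sufficiently large $x$, and a direct numerical check (using monotonicity of the relevant quotients) closes the gap on the bounded interval. I expect the only delicate point to be making the comparison $g'(x)h(x) > |g(x)h'(x)|$ fully explicit on an intermediate range like $e \le x \le C$ for an absolute constant $C$; this reduces to a one-variable inequality that can be verified by elementary estimates on $\frac{\exp(x)}{kx}$ together with $k \ge 1$ and $c > 0$, since larger $k$ only shrinks $g$ and its derivative proportionally in a way that still keeps the sum positive. I would handle the dependence on $k$ by checking that the worst case is $k=1$ (the function $t \mapsto \log(1+t/(kx))$ and its $x$-derivative are both decreasing in $k$, but their ratio $g'/g$ is what controls the sign, and one verifies this ratio is monotone in the right direction), and the dependence on $c$ is monotone as well since a larger $c$ scales both $h-1$ and $h'$ by the same factor.
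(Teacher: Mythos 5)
Your reduction to $(gh)'=g'h+gh'>0$ and your treatment of $[1,e]$ are fine, but the main case $x>e$ has a genuine gap, and it is located exactly where the lemma is nontrivial: the uniformity in $c$. In your displayed estimate you bound $h(x)\ge 1$ and thereby discard the positive term $g'(x)\cdot\frac{c\log x}{x}$, keeping only
$$
(gh)'(x)\ \ge\ g'(x)-\frac{c(\log x-1)}{x}.
$$
This lower bound is false as a proof device: $g'(x)=\frac{x-1}{x}\cdot\frac{\exp(x)}{kx+\exp(x)}<1$ for all $x$ (it tends to $1$, it does not grow linearly as you assert), while the subtracted term $\frac{c(\log x-1)}{x}$ can be made larger than $1$ at, say, $x=e^{2}$ by taking $c>e^{2}$. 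So the right-hand side is negative for large $c$, and your proposed ``numerical check on a bounded interval $[e,C]$ with an absolute constant $C$'' cannot close the argument, because the threshold beyond which the negative term is small necessarily depends on $c$. The discarded term is precisely the one that must beat the $c$-scaled negative term: since the numerator of $(gh)'$ is affine in $c$ and nonnegative at $c=0$ (it equals $x(x-1)$ there), the whole lemma reduces to showing that the coefficient of $c$ is nonnegative, i.e.
$$
x\,g'(x)\log x\ \ge\ g(x)(\log x-1),
\qquad\text{equivalently}\qquad
(x-1)\log x\ \ge\ (\log x-1)(1+\vartheta)\log\bigl(1+\tfrac{1}{\vartheta}\bigr),
\quad \vartheta=\tfrac{kx}{\exp(x)}.
$$
You gesture at this at the very end (``larger $c$ scales both $h-1$ and $h'$ by the same factor'') but never prove the resulting inequality, and it is not a routine check: both sides are asymptotic to $x\log x$, so the leading terms cancel and one must control the lower-order terms. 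This is exactly where the paper spends its effort: it reduces to $k=1$ via the monotonicity of $z\mapsto(1+z)\log(1+\frac1z)$ (your heuristic that $k=1$ is the worst case is correct and is made rigorous this way), then uses $\log\bigl(1+\frac{\exp(x)}{x}\bigr)\le x+1-\log x$ and exhibits the resulting expression as a sum of three manifestly positive terms. Without that (or an equivalent) argument, your proposal does not prove the lemma; also note, as a minor point, that your auxiliary bound $g(x)\le x$ fails near $x=1$ (e.g.\ $g(1)=\log(1+e)>1$ for $k=1$), though that is not the essential difficulty.
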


\begin{proof}
The derivative of \eqref{ins_f} with respect to $x$ is
$$
\frac{-c(\log x-1)(1+\vartheta)\log(1+\frac{1}{\vartheta})+(x-1)(c\log x+x)}{x^2(1+\vartheta)},
$$
where we write $\vartheta=\frac{kx}{\exp(x)}$ to simplify. We want to show that the numerator is positive for $x \ge 1$. We will show that the function
$$
-(\log x-1)(1+\vartheta)\log(1+\frac{1}{\vartheta})+(x-1)\log x
$$
is positive for $x \ge 1$. It is the derivative of the numerator with respect to $c$. We observe that it implies the desired result for each fixed $c > 0$ and $x \ge 1$. 

This result clearly holds when $x \in [1,\exp(1)]$. For $x > \exp(1)$, since the function $z \mapsto (1+z)\log\bigl(1+\frac{1}{z}\bigr)$ is strictly decreasing for $z > 0$, it is enough to prove the result with $k=1$. We then use the inequality $\log 1+\frac{\exp(x)}{x} \le \log \frac{\exp(x+1)}{x}$ to deduce that it is enough to prove that
$$
-(\log x-1)(1+\frac{x}{\exp(x)})(x+1-\log x)+(x-1)\log x \ge 0
$$
for $x > \exp(1)$. We observe that it can be written as
$$
(x+1-3\log x)+\Bigl(\log^2 x-\frac{(x+1)^2\log x}{\exp(x)}\Bigr)+\frac{x^2+x+x\log^2 x+\log x}{\exp(x)}
$$
and the result follows from the fact that each of the three terms is positive for $x > \exp(1)$.
\end{proof}

Let $\gamma(n)$ stand for the product of the distinct prime factors of $n$.

\begin{lem}\label{lem_4}
For each integer $n \ge 2$,
\begin{equation}\label{ram}
\tau(n) \le \prod_{p \mid n}\Bigl(\frac{\log n\gamma(n)}{\omega(n)\log p}\Bigr).
\end{equation}
\end{lem}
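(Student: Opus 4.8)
The plan is to recognize \eqref{ram} as a Ramanujan-type bound that follows directly from the arithmetic--geometric mean inequality applied to $\omega(n)$ suitably normalized quantities. Write $n=p_1^{\alpha_1}\cdots p_k^{\alpha_k}$ with $k=\omega(n)\ge 1$, so that $\tau(n)=\prod_{i=1}^{k}(\alpha_i+1)$, while $n\gamma(n)=\prod_{i=1}^{k}p_i^{\alpha_i+1}$ gives the key identity
$$
\log\bigl(n\gamma(n)\bigr)=\sum_{i=1}^{k}(\alpha_i+1)\log p_i .
$$

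Next, for $1\le i\le k$ set
$$
t_i:=\frac{(\alpha_i+1)\log p_i}{\log\bigl(n\gamma(n)\bigr)} .
$$
Each $t_i$ is strictly positive (since $p_i\ge 2$ and $n\ge 2$ forces $\log(n\gamma(n))>0$), and the identity above says exactly that $\sum_{i=1}^{k}t_i=1$. The AM--GM inequality then yields
$$
\prod_{i=1}^{k}t_i\le\Bigl(\frac{1}{k}\sum_{i=1}^{k}t_i\Bigr)^{k}=\frac{1}{k^{k}} .
$$

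Unwinding the definition of $t_i$ turns this into $\prod_{i=1}^{k}(\alpha_i+1)\log p_i\le k^{-k}\bigl(\log(n\gamma(n))\bigr)^{k}$, that is,
$$
\tau(n)=\prod_{i=1}^{k}(\alpha_i+1)\le\prod_{i=1}^{k}\frac{\log\bigl(n\gamma(n)\bigr)}{k\log p_i}=\prod_{p\mid n}\Bigl(\frac{\log n\gamma(n)}{\omega(n)\log p}\Bigr),
$$
which is \eqref{ram}. I do not expect any genuine obstacle here: the only points requiring care are the trivial positivity and normalization checks that license AM--GM, and keeping the bookkeeping between $\prod(\alpha_i+1)$ and the right-hand product transparent. (As a sanity check, when $k=1$ both sides equal $\alpha_1+1$, so the inequality is sharp on prime powers; equality in AM--GM would otherwise require all $(\alpha_i+1)\log p_i$ equal, which cannot occur for $k\ge 2$, though this refinement is not needed in what follows.)
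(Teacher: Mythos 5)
Your argument is correct and complete: the identity $\log(n\gamma(n))=\sum_{i}(\alpha_i+1)\log p_i$ together with AM--GM applied to the normalized quantities $t_i$ gives exactly \eqref{ram}, and your edge checks (positivity, the $k=1$ case, impossibility of equality for $k\ge 2$) are all sound. The paper itself does not prove this lemma; it only cites Ramanujan and Corollary 4.5 of the De Koninck--Letendre paper. What you have written is precisely the standard AM--GM proof of Ramanujan's inequality that those references contain, so you have in effect made the paper self-contained at this point rather than diverging from it.
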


\begin{proof}
This is a famous inequality due to Ramanujan \cite{sr}. We can find a proof in Corollary 4.5 of \cite{jmdk:pl} as well.
\end{proof}

\begin{lem}\label{lem_5}
Let $\omega(n)=:k \ge 74$. Then,
\begin{equation}\label{res_1}
\tau(n) < \Bigl(1+\frac{\log n}{k\log k}\Bigr)^k.
\end{equation}
\end{lem}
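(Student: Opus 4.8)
The plan is to start from Ramanujan's inequality (Lemma \ref{lem_4}) and reduce the bound to the worst case, namely a primary integer. Write $k = \omega(n)$ and set $L := \log(n\gamma(n))$. Lemma \ref{lem_4} gives $\tau(n) \le \prod_{p\mid n} \frac{L}{k\log p}$. Among all $n$ with a prescribed number of prime factors $k$ and prescribed value of $L$, the right-hand side is largest when the primes dividing $n$ are as small as possible, i.e. $p_1,\dots,p_k$; this is because replacing a prime factor by a smaller one decreases each $\log p$ in a denominator (and one must check that adjusting the exponents to keep $L$ fixed only helps, or alternatively bound $\gamma(n) \le n$ and work with $\log n^2 = 2\log n$ throughout, then optimize). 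So it suffices to prove the inequality for primary $n$, and then the extremal configuration of prime factors is $p_1 \cdots p_k$.

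For such $n$, I would compare $\prod_{j=1}^k \frac{L}{k\log p_j}$ with $\bigl(1+\frac{\log n}{k\log k}\bigr)^k$ by taking logarithms: it suffices to show
$$
\sum_{j=1}^k \log\frac{L}{k\log p_j} < k\log\Bigl(1+\frac{\log n}{k\log k}\Bigr).
$$
By the AM–GM / concavity of $\log$, the left side is at most $k\log\bigl(\frac{L}{k}\cdot\frac{1}{k}\sum_j \frac{1}{\log p_j}\bigr)$, but a cleaner route is to use $\sum_j \log\log p_j \ge k\log\log k$ for $k \ge 44$ (inequality \eqref{lem2-1}), since this controls $\prod_j \log p_j$ from below via the logarithm of the geometric mean. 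Writing the target as $\sum_j \log L - \log k - \log\log p_j < k\log(1 + \frac{\log n}{k\log k})$ and applying \eqref{lem2-1} reduces everything to a one-variable inequality in $k$ and $\log n$, after bounding $L = \log(n\gamma(n)) \le 2\log n$ and using \eqref{lem2-2}, \eqref{lem2-4} to relate $\log n$ and $\log n_k$ to $k\log k$ and $k\log\log n_k$.

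At that point the inequality should follow from monotonicity considerations: one sets $\vartheta$-type variable $x$ so that $\log n \approx k\log\log n$ or compares against the primary integer $n_k$ where the bound is tightest, invoking Lemma \ref{lem_3} with the substitution that turns $1 + \frac{\log n}{k\log k}$ into the form $1 + \frac{\exp(x)}{kx}$ to handle the dependence on $\log n$ uniformly. The main obstacle I anticipate is the reduction step: justifying rigorously that primary integers with the smallest primes are extremal for the Ramanujan product under the constraint coming from a fixed (or bounded) value of $\log n$, since both the number of terms and the size of each denominator $\log p$ interact with the exponents $\alpha_i$; handling this cleanly — perhaps by the crude bound $\gamma(n) \le n$ and a separate argument that $\log p_j$ in the denominators can only be decreased — is where the real care is needed. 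The threshold $k \ge 74$ is presumably what is required to absorb the error terms in \eqref{lem2-1}–\eqref{lem2-5} and make the final numerical inequality go through; smaller $k$ would be left to direct computation (and indeed fall under the scope of the later theorems rather than this lemma).
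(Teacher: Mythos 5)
You should first note that the paper does not prove this lemma at all: it is quoted verbatim as Theorem~3.4 of \cite{jmdk:pl}, so the only ``proof'' in the paper is a citation, and the remark that restricting to $k\ge 95$ ``requires substantially fewer computations'' already signals that the real argument is computationally heavy and not the soft Ramanujan-plus-AM--GM reduction you sketch.

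The decisive gap in your sketch is quantitative and cannot be repaired along the route you propose. After using \eqref{lem2-1} to bound the geometric mean of the $\log p$ from below by $\log k$, Lemma~\ref{lem_4} gives at best $\tau(n)\le \bigl(\frac{\log (n\gamma(n))}{k\log k}\bigr)^k$, and your bound $\gamma(n)\le n$ turns this into $\bigl(\frac{2\log n}{k\log k}\bigr)^k$. But $\frac{2\log n}{k\log k}\le 1+\frac{\log n}{k\log k}$ holds if and only if $\log n\le k\log k$, and by \eqref{lem2-5} every $n$ with $\omega(n)=k\ge 74$ satisfies $\log n\ge\log n_k\ge k(\log k+\log\log k-5/4)>k\log k$ (since $\log\log k>5/4$ for $k\ge 33$). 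So the chain of inequalities you describe can \emph{never} close, for any $n$ in the lemma's range: the factor $2$ lost in $\gamma(n)\le n$ is exactly what the ``$1+$'' in the statement refuses to absorb. Any genuine proof must exploit that when $\log n$ is large compared to $k\log k$ the exponents are large and $\log\gamma(n)$ is then much smaller than $\log n$, i.e.\ it must treat $\log\gamma(n)$ and $\log n$ as separate quantities and optimize over the exponent configuration. Two secondary issues: the extremality claim ``smallest primes are worst'' is not a clean monotone step, because decreasing a prime decreases the common numerator $\log(n\gamma(n))$ in \emph{every} factor of the product while increasing only one factor's denominator reciprocal, so the net effect needs a real argument; and Lemma~\ref{lem_3} is about the monotonicity of the normalizing factor in $\rho$, not about the Ramanujan product, so invoking it here does not supply the missing optimization.
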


\begin{proof}
This is Theorem 3.4 from \cite{jmdk:pl}. In this paper we are using this result only for $k \ge 95$, which requires substantially fewer computations.
\end{proof}

\begin{lem}\label{lem_6}
Let $\delta > 0$ and $[\alpha,\beta] \subseteq [a,b]$ be fixed. Let also $h \in C([a,b])$ satisfying $\max_{x \in [\alpha,\beta]} |h'(x)| \le M_1$. Assume that
$$
\max_{\substack{i \in \mathbb{Z}\\ \mu+i\delta \in [a,b]}}h(\mu+i\delta) \le M
$$
for some $\mu \in [\alpha,\beta]$. Then,
$$
\max_{x \in [\alpha,\beta]}h(x) < M+\delta M_1.
$$
\end{lem}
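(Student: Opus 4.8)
The plan is a one-step covering argument built on the mean value theorem; the only genuinely delicate point is making sure the grid point we compare against lies inside $[a,b]$, so that the hypothesis actually applies to it.

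First I would fix an arbitrary $x \in [\alpha,\beta]$ and exhibit a grid point $y = \mu + i\delta$ (with $i \in \mathbb{Z}$) that lies in the closed interval with endpoints $\mu$ and $x$ and satisfies $|x-y| < \delta$. The natural choice is $i = \lfloor (x-\mu)/\delta\rfloor$ when $x \ge \mu$ and $i = \lceil (x-\mu)/\delta\rceil$ when $x < \mu$: a routine estimate of the floor/ceiling shows that $y$ sits between $\mu$ and $x$ and is within $\delta$ of $x$. Since $\mu, x \in [\alpha,\beta] \subseteq [a,b]$, this forces $y \in [\alpha,\beta] \subseteq [a,b]$, so that $h(y) \le M$ by assumption. (Choosing $y$ on the $\mu$-side of $x$, rather than simply the grid point nearest to $x$, is precisely what keeps $y$ inside $[a,b]$ when $x$ is close to an endpoint of $[\alpha,\beta]$.)

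Next I would apply the mean value theorem to $h$ on the closed subinterval of $[\alpha,\beta]$ with endpoints $x$ and $y$, obtaining some $\xi$ in that subinterval (hence $\xi \in [\alpha,\beta]$) with $h(x) - h(y) = h'(\xi)(x - y)$, and therefore
$$
h(x) \le h(y) + |h'(\xi)|\,|x-y| \le M + M_1\,|x-y| < M + \delta M_1 ,
$$
the final strict inequality coming from $|x-y| < \delta$ (the degenerate case $M_1 = 0$, where $h$ is constant on $[\alpha,\beta]$, may be set aside). Finally, since $[\alpha,\beta]$ is compact and $h$ is continuous, the maximum of $h$ over $[\alpha,\beta]$ is attained at some $x_0$, and applying the displayed estimate at $x_0$ gives $\max_{x \in [\alpha,\beta]} h(x) < M + \delta M_1$, as desired.

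As for the main obstacle: there really is no substantial one — the entire content is in the bookkeeping of the second paragraph, namely selecting $y$ so that it is simultaneously within $\delta$ of $x$ and a legitimate point of $[a,b]$ to which the gridded hypothesis applies; once that is arranged, the mean value theorem does the rest.
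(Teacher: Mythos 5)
Your proof is correct and follows essentially the same route as the paper, which picks a grid point $\mu+j\delta$ within $\delta$ of $z$ and writes $h(z)=h(\mu+j\delta)+\int_{\mu+j\delta}^{z}h'(t)\,\mathrm{d}t$ — your use of the mean value theorem in place of this integral identity is immaterial. Your extra care in choosing the grid point on the $\mu$-side of $x$, so that it lies in $[\alpha,\beta]$ where both the gridded hypothesis and the derivative bound are available, is a minor refinement of a detail the paper leaves implicit.
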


\begin{proof}
Let $z \in [\alpha,\beta]$ be fixed. There is a $j \in \mathbb{Z}$ with $|\mu+j\delta-z| < \delta$. The result follows from
$$
h(z)=h(\mu+j\delta)+\int_{\mu+j\delta}^{z}h'(t){\rm d}t.
$$
\end{proof}

\section{Proof of Theorem \ref{thm_1}}

Throughout this proof, we often write $x=\log\log n$ to simplify the notation. Also, $x$ is sometime considered as a real variable when arguments from calculus have to be used. It should not be a problem for the reader. Furthermore, it is always assumed that $k=\omega(n)$ is fixed. Lemma \ref{lem_3} allows us to assume that $n$ is primary when $\omega(n) = k \ge 3$. We have used PARI/GP to verify that $\rho(n) < 2$ for each $17 \le n \le 10^9$. In particular, this verification along with Lemma \ref{lem_3} leave us with only primary integers to verify in the case where $k=1,2$ too.

\subsection{The case $k \ge 11000$}\label{t1.1}

In this section, we will establish that $\rho(n) < 2$ for all the integers with at least $11000$ distinct prime factors. Our main tool is \eqref{res_1}. Let us write
$$
1+\frac{\exp(x)}{k\log k}=1+\frac{1}{\vartheta}+\Bigl(\frac{\exp(x)}{k\log k}-\frac{1}{\vartheta}\Bigr)
$$
where the term in parenthesis is positive if and only if $x \ge \log k$. If it is negative, then the result follows directly of \eqref{res_1}. In the case where it is positive, we use the mean value theorem to get to
\begin{eqnarray*}
\log\Bigl(1+\frac{\exp(x)}{k\log k}\Bigr) &=& \log\Bigl(1+\frac{1}{\vartheta}+\Bigl(\frac{\exp(x)}{k\log k}-\frac{1}{\vartheta}\Bigr)\Bigr)\\
&\le& \log\Bigl(1+\frac{1}{\vartheta}\Bigr)+\frac{1}{1+\frac{1}{\vartheta}}\Bigl(\frac{\exp(x)}{k\log k}-\frac{1}{\vartheta}\Bigr)\\
&=& \log\Bigl(1+\frac{1}{\vartheta}\Bigr)\Bigl(1+\frac{1}{(\vartheta+1)\log(1+\frac{1}{\vartheta})}\frac{x-\log k}{\log k}\Bigr)
\end{eqnarray*}
Now, since $(\vartheta+1)\log(1+\frac{1}{\vartheta}) \ge \log(\frac{1}{\vartheta})=:t$ and $t=x-\log x-\log k$, we deduce that $\rho(n) < 2$ holds if
\begin{eqnarray*}
\frac{\log x+t}{t(x-\log x-t)} & < & \frac{2\log x}{x}
\end{eqnarray*}
which is the case if $1 \le t \le \frac{x}{2}$ when $x \ge 11.66$. Indeed, by expanding, we find a parabola in $t$ so that it is enough to verify at $t=1$ and at $t=\frac{x}{2}$. From there, it is an easy exercise that uses calculus. The details are left to the reader.

In the case where $t > \frac{x}{2}$, we use the fact that $\tau(n) < (\frac{2\log n}{k\log k})^k < (\frac{\log n}{k})^k$ (since $2^k \le \tau(n) < (1+\frac{\log n}{k\log k})^k$) from Lemma \ref{lem_5}. Thus, since we have
\begin{eqnarray*}
\log\Bigl(1+\frac{1}{\vartheta}\Bigr)\Bigl(1+\frac{2\log x}{x}\Bigr) &>& \log\Bigl(\frac{1}{\vartheta}\Bigr)\Bigl(1+\frac{2\log x}{x}\Bigr)\\
&=& (x-\log x-\log k)\Bigl(1+\frac{2\log x}{x}\Bigr)\\
&=& (x-\log x-\log k)+t\frac{2\log x}{x}\\
&>& x-\log k,\\
\end{eqnarray*}
the result follows.

The remaining case is when $t < 1$, i.e. when $\frac{1}{\exp(1)} < \vartheta < 1.39$ (see Lemma \ref{lem_1}). We then have
$$
\frac{1}{(\vartheta+1)\log(1+\frac{1}{\vartheta})}\frac{x-\log k}{\log k} \le \frac{\log x+1}{1.25(x-\log x-1)} < \frac{2\log x}{x}
$$
for $x \ge 11.66$. Since $\log\log n_{11000} > 11.66$, the result follows.

\subsection{The case $44 \le k \le 10999$}

We use inequality \eqref{ram} into the definition of $\rho$ \eqref{rho} to get to
\begin{eqnarray}\label{s3,i1}
\rho(n) & \le & \Bigl(\frac{\log(\exp(x)+\log n_k)-\log k-\frac{1}{k}\sum_{j=1}^{k}\log\log p_j}{\log(1+\frac{1}{\vartheta})}-1\Bigr)\frac{x}{\log x}\\ \label{s3,i2}
& = & \frac{x\log(\frac{\exp(x)+\log n_k}{\exp(x)+kx})+x\log x-\frac{x}{k}\sum_{j=1}^{k}\log\log p_j}{\log x\log(1+\frac{\exp(x)}{kx})}.
\end{eqnarray}

Since $x \ge \log\log n_k$ and from the inequality $\log n_k \le k\log\log n_k$ \eqref{lem2-4}, we deduce that the first term in the numerator in \eqref{s3,i2} is negative. Thus, from $\sum_{j=1}^{k}\log\log p_j > k\log\log k$ \eqref{lem2-1} and $\log(1+\frac{\exp(x)}{kx}) > x-\log x-\log k$, we find that $\rho(n) < 2$ if
\begin{equation}\label{i:1.2.1}
x\log x+x\log\log k-2\log x(\log x+\log k) > 0
\end{equation}
which is the case when $x \ge 1.8\log k$. To prove this fact, we first write $x=z\log k$. We then show that the derivative with respect to $z$ of the left hand side of \eqref{i:1.2.1} is positive and also that it is positive at $z=1.8$ for each $k \ge 44$. Then, since $x \ge \log\log n_k > \log k$ \eqref{lem2-3}, we can now assume that $x \in [\log k,1.8\log k]$.

It remains to verify that $\rho(n) < 2$ for each $x \in [\log\log n_k,1.8\log k]$ and each fixed value of $k$ with a limited number of computations. To do so, we will work with the function
\begin{equation}\label{s2;r,ram}
h_k(x):=\Bigl(\frac{\log(\exp(x)+\log n_k)-\log k-\frac{1}{k}\sum_{j=1}^{k}\log\log p_j}{\log(1+\frac{\exp(x)}{kx})}-1\Bigr)\frac{x}{\log x},
\end{equation}
which is the right hand side of \eqref{s3,i1}, and use Lemma \ref{lem_6}.

Let us first establish that
\begin{equation}\label{maj:der}
\max_{x \in [\log\log n_k,1.8\log k]}|h'_k(x)| \le\frac{400}{81}\frac{\log^2 k}{\log\log k}+\frac{130}{27}\frac{\log k}{\log\log k}.
\end{equation}
We have
\begin{equation}\label{diff_1}
h'_k(x)=\Bigl(\frac{1}{\log x}-\frac{1}{\log^2 x}\Bigr)\Bigl(\frac{W}{\ell}-1\Bigr)+\frac{x}{\log x}\Bigl(\frac{\exp(x)}{\ell(\exp(x)+\log n_k)}-\frac{W}{\ell^2}\frac{\frac{1}{\vartheta}-\frac{1}{x\vartheta}}{1+\frac{1}{\vartheta}}\Bigr),
\end{equation}
where we wrote $\ell:=\log(1+\frac{1}{\vartheta})$ and $W:=\log(\exp(x)+\log n_k)-\log k-\frac{1}{k}\sum_{j=1}^{k}\log\log p_j$ to simplify. Clearly,
\begin{equation}\label{s3,i3}
|h'_k(x)| \le \frac{1}{\log x}\Bigl|\frac{W}{\ell}-1\Bigr|+\frac{x}{\log x}\Bigl(\frac{1}{\ell}+\frac{W}{\ell^2}\Bigr) \quad (x \ge \exp(1)).
\end{equation}
From there, we use the fact that $\vartheta \le 1.39$, i.e. $\ell > \frac{27}{50}$, and we will establish that $W < 0.8\log k$ uniformly for $x \in [\log\log n_k,1.8\log k]$. It allows us to conclude that \eqref{maj:der} holds since $-1 < \frac{W}{\ell}-1 < \frac{W}{\ell}$ given that $\log 2 \le \frac{\log \tau(n)}{k} \le W$ so that $\Bigl|\frac{W}{\ell}-1\Bigr| < \frac{50}{27}W$. Now, from \eqref{lem2-1} and \eqref{lem2-2},
\begin{eqnarray*}
W & < & \log (k^{1.8}+2k\log k)-\log k-\log\log k\\
& = & \log (\frac{k^{0.8}}{\log k}+2) \le 0.8\log k
\end{eqnarray*}
and the desired inequality follows.

We verify that the right hand side of \eqref{maj:der} is an increasing function of $k$ on the interval $[44,10999]$. Thus it is less than $M_1 = 215$. For this reason, we set $\delta = 0.002$ and we thus have $\delta M_1 = 0.43$. Using Maple, we evaluate the right side of $h_k(x)$ at each step of 0.002 in the interval $x \in [\log k,1.8\log k]$ for each $k \in [44,10999]$. This verification finishes the proof that $\rho(n) < 2$ for each such value of $k$.

\subsection{The case $1 \le k \le 43$}

In this section, we finishes the proof of Theorem \ref{thm_1}. We will see that only the case $k=2$ has some values of $n$ for which $\rho(n) \ge 2$. The general idea is the same as in the previous section. We now set up what is needed to use Lemma \ref{lem_6} on the function $h_k(x)$.

Obviously we have $x \ge \log\log n_k$ and our main objective is to find an upper bound for an $x$ that would realize $\rho(n) \ge 2$. In order to do that, we start from \eqref{s3,i1} and write
\begin{eqnarray} \nonumber
\rho(n) & \le & \Bigl(\frac{\log(\exp(x)+\log n_k)-\log k-\frac{1}{k}\sum_{j=1}^{k}\log\log p_j}{\log(1+\frac{1}{\vartheta})}-1\Bigr)\frac{x}{\log x}\\ \nonumber
& \le & \Bigl(\frac{\log(2\exp(x))-\log k-\frac{1}{k}\sum_{j=1}^{k}\log\log p_j}{x-\log x-\log k}-1\Bigr)\frac{x}{\log x}\\ \label{s4,i1}
& = & \frac{x\log 2x-\frac{x}{k}\sum_{j=1}^{k}\log\log p_j}{(x-\log x-\log k)\log x}.
\end{eqnarray}
We then show that \eqref{s4,i1} is strictly less than 2 for $x \ge 9.36$ for each $k \in [1,43]$. Now that we have the desired upper bound for $x$, we are ready for the final verification. From a previous verification, we know that  $x \ge \log\log 10^9$. We need an upper bound for $|h'_k(x)|$ with $x \in [\log\log \max(10^9,n_k),9.36]$ and for that we use \eqref{s3,i3}. We still have $\ell \ge \frac{27}{50}$ and we get an upper bound for $W$ directly from the fact that $x \le 9.36$. We find that
$$
\max_{x \in [\log\log \max(10^9,n_k),9.36]}h'_k(x) \le 165=:M_1.
$$
For this reason, we choose $\delta = 0.00004$ so that $\delta M_1=0.0066$. We verify with a computer at each step of $0.00004$ in $[\log\log \max(10^9,n_k),9.36]$ for $k=1$ and $k \in [3,43]$ and call the maximum $M$. We find that $M+\delta M_1 < 2$. Finally, for $k = 2$, we verify that $\max_{x \in [4,9.36]}h_2(x) < 2$ using the same method. For $x \le 4$, it is enough to verify the numbers of the shape $2^a\cdot 3^b$ with $a \le 77$ and $b \le 49$ that are larger than $10^9$. We find that the maximum is only realized by $n=2^{26} \cdot 3^{16}$. The proof is complete.

\section{Proof of Theorem \ref{thm_2}}

Let us fix $\vartheta \in (0,1]$ and choose an $\epsilon$ satisfying $\frac{\vartheta}{100} \ge\epsilon > 0$. We can assume that $\vartheta \in (1/(s+1),1/s]$ for some positive integer $s$. It is enough to prove the result for primary integers. We consider the ordered set $\mathcal{H}_{\epsilon}$ of primary integers for which
$$
\Bigl|\omega(n) - \frac{\vartheta \log n}{\log\log n}\Bigr| < \frac{\epsilon\log n}{\log\log n}.
$$
We define the constant
$$
\mu := \limsup_{\substack{n \rightarrow \infty \\ n \in \mathcal{H_{\epsilon}}}} \frac{\log \tau(n)\log\log n}{\log n}.
$$
Let
$$
\mathcal{H}_{\epsilon}^*:=\{n \in \mathcal{H}_{\epsilon} \cap \mathbb{R}_{>x_0}:\ \Bigl|\frac{\log \tau(n)\log\log n}{\log n}-\mu\Bigr| \le \epsilon^2\}
$$
where $x_0(=x_0(\epsilon))$ is chosen large enough so that $\frac{\log \tau(n)\log\log n}{\log n} \ge \mu + \epsilon^2$ is impossible in $\mathcal{H}_{\epsilon}$. Each primary integer $u$ can be written uniquely as $u:=u^{**}\cdot u^{*} \cdot u^{(s+1)}\cdots u^{(1)}$ where
$$
u^{(j)}:=\prod_{\substack{p \mid u \\ p^j \| u}}p^j \quad (j=1,\dots,s+1)
$$
and where $u^{**}$ is the divisor of $u$ formed of the (at most) $\lfloor\epsilon\omega(u)\rfloor$ first prime numbers. Let us assume, for a contradiction, that for some $n \in \mathcal{H}_{\epsilon}^{*}$ large enough we have $\omega(n^*) \ge \lfloor\epsilon\omega(n)\rfloor$. We will then find a primary integer $n'$ satisfying
$$
\omega(n') < \frac{(\vartheta+\epsilon)\log n'}{\log\log n'},
$$
$$
n \le n' \le n\exp\Bigl(\epsilon\log(c_1/\epsilon)\frac{\log n}{\log\log n}(1+o(1))\Bigr),
$$
for some constant $c_1$, and for which $\tau(n') \ge (1+\frac{1}{8s^2})^{\lfloor\epsilon\omega(n)\rfloor}\cdot \tau(n)$. So we will have
\begin{equation}
\exp\Bigl((\mu+c_2\epsilon)\frac{\log n}{\log\log n}\Bigr)  \le \tau(n') \le  \exp\Bigl((\mu+\epsilon^2)\frac{\log n'}{\log\log n'}\Bigr),
\end{equation}
where $c_2$ is a constant depending only on $\theta$, from which we find a contradiction for $\epsilon$ small enough when $n$ is large enough. This means that in fact $\omega(n^*) < \epsilon\omega(n)$ and we have established that, for $n$ large enough, $n:=n'' \cdot n^{(s+1)}\cdots n^{(1)}$ where $n''$ is made of at most the first $2\epsilon\omega(n)$ prime numbers.

We are thus ready to define this integer. We verify that the transformation of $n$ which consists in replacing the largest prime factor $q_1$ of $n^*$ by the smallest prime factor $q_2$ of $n^{(j)}$ (for the smallest $j \in \{1,\dots,s\}$ available), i.e. $n \mapsto \frac{q_2 n}{q_1}$, increases the value of $\tau$ by a factor $\ge 1+\frac{1}{8s^2}$, increases the integer by a factor $\ll 1/\epsilon$ and transform $n$ into a new primary integer. Since $\vartheta > \frac{1}{s+1}$, it is possible to iterate this transformation $\lfloor\epsilon\omega(n)\rfloor$ times for $\epsilon$ small enough and $n$ large enough. By doing so, starting with $n$, we end with an integer $n'$ satisfying the 3 announced properties.

Now, from Theorem \ref{thm_1} we have
$$
\tau(n'') \le \exp\Bigl(c_3 \epsilon\log(1/\epsilon)\frac{\log n}{\log\log n}\Bigr)
$$
for some constant $c_3$. We deduce that $\tau(n'')$ is small when compared to $\tau(n)$. We can thus consider the integer $m:=n/n''=n^{(1)}\cdots n^{(s+1)}$ and optimize the value of $\tau(m)$ under the condition
$$
\omega(m) = \frac{(\vartheta+O(\epsilon))\log n}{\log\log n}.
$$
By writing $k_1+\dots+k_{s+1}=\omega(m)=:k$, i.e. $k_j:=\omega(n^{(j)})$, we find
$$
\log(\tau(m)) = k_1\log(2)+\cdots+k_{s+1}\log(s+2)
$$
so that $\tau(m)$ is maximal when most of the $k_j$ with $j$ small are zero. For this reason, we will assume that $k_1,\dots,k_{s-1}=0$ and that only $k_s$ and $k_{s+1}$ may be nonzero. We write $k_s=\alpha k$, so that $k_{s+1}=(1-\alpha)\cdot k$. From there, we can assume that $m=n^{(s)}\cdot n^{(s+1)}$ and now the problem is to maximize
\begin{equation}\label{th-2,ineq}
\log(\tau(m)) = \bigl(\alpha\log(s+1)+(1-\alpha)\log(s+2)\bigr) \cdot k
\end{equation}
under $m \le n$ which can be written as
\begin{eqnarray*}
\log n & \ge & s\sum_{p \mid n_s}\log p+(s+1)\sum_{p \mid n_{s+1}}\log p \\
& = & \bigl(s\alpha k\log(\alpha k)+(s+1)(1-\alpha) k\log((1-\alpha) k)\bigr)(1+o(1)) \\
& = & \bigl(s\alpha+(s+1)(1-\alpha)\bigr)\cdot(k\log k)\cdot(1+o(1)) \\
& = & \bigl(s\alpha+(s+1)(1-\alpha)\bigr)\cdot(\vartheta+O(\epsilon))\cdot(\log n)\cdot(1+o(1)).
\end{eqnarray*}
We deduce that $\alpha \ge s+1-1/\vartheta+O(\epsilon)$ and by using this inequality in \eqref{th-2,ineq} we get
\begin{eqnarray*}
\log(\tau(m)) & \le & \bigl((\vartheta(s+1)-1)\cdot\log(s+1)+(1-s\vartheta)\cdot\log(s+2)+O(\epsilon)\bigr) \cdot \frac{\log n}{\log\log n}\\
& = & \bigl(f(\vartheta)+O(\epsilon)\bigr) \cdot \frac{\log n}{\log\log n}
\end{eqnarray*}
so that
$$
\log(\tau(n)) \le \bigl(f(\vartheta)+O(\epsilon\log(1/\epsilon))\bigr) \cdot \frac{\log n}{\log\log n}.
$$
This is the desired upper bound.  

For the lower bound, we choose a large $z$ and we construct an integer $m=m_1^{s+1}m_2^{s}$ such that
$$
m_1=\prod_{j \le (1-s\vartheta)\frac{\log z}{\log\log z}}p_j \quad \mbox{and} \quad m_2=\prod_{\substack{j \le \vartheta\frac{\log z}{\log\log z}\\ p\nmid m_1}}p_j.
$$
We verify that $m=z\exp\bigl(O\bigl(\frac{\log z \log\log\log z}{\log\log z}\bigr)\bigr)$. The proof is complete.

\section{Concluding remarks}

We have seen that $\rho(n) < 2$ given $\omega(n) \neq 2$. One can wonder if it is a good inequality. We can see directly from \eqref{s3,i2} that $\limsup_{\substack{n \rightarrow \infty \\ \omega(n)=k}}\rho(n) \le 1$. Let us show that we have in fact equality. Indeed, let $(z_i)_{i \ge 1}$ be a strictly increasing sequence of integers large enough. There are $\theta_1,\dots,\theta_k$ satisfying $\max_{j} |\theta_j| < 1/2$ such that
$$
\prod_{j=1}^{k}\Bigl(\frac{\log z_i n_k}{k\log p_j}+\theta_j\Bigr) =:\prod_{j=1}^{k}(\alpha_j+1)=\tau(m_i)
$$
which defines the integer $m_i:=p_1^{\alpha_1}\cdots p_k^{\alpha_k}$. We verify that
$$
\log \tau(m_i)=k\log(\exp(x_i)+\log n_k)-k\log k-\sum_{j=1}^{k}\log\log p_j +O\Bigl(\frac{k\log n_k}{\exp(x_i)}\Bigr)
$$
where $x_i:=\log\log m_i$. Thus, by using this value of $\log \tau(m_i)$ in \eqref{rho} (as we did to obtain \eqref{s3,i1}) we deduce as above that $\limsup_{\substack{n \rightarrow \infty \\ \omega(n)=k}}\rho(n) \ge 1$.

Let us now prove that
$$
\limsup_{n \rightarrow \infty}\rho(n) = 1.
$$
We already have the lower bound for each single value of $k \ge 1$. For the upper bound we use the main argument of Section \ref{t1.1}. Precisely, for each fixed $\epsilon > 0$ we have

$$
\frac{1}{(\vartheta+1)\log(1+\frac{1}{\vartheta})}\frac{\log x+t}{x-\log x-t} \le \frac{\log x}{x}
$$
for $t \le (1-\epsilon)x$ when $x$ is large enough. In the case where $t > (1-\epsilon)x$, we use inequality \eqref{s3,i2} to find $\limsup_{n \rightarrow \infty}\rho(n) \le \frac{1}{1-\epsilon}$.

{\it E-mail address:} {\tt Patrick.Letendre.1@ulaval.ca}


\begin{thebibliography}{99}


\bibitem{pe:jln} P. Erd\H{o}s and J.-L. Nicolas, \emph{Sur la fonction: nombre de facteurs premiers de $n$}, S\'eminaire Delange-Pisot-Poitou, Th\'eorie des nombres, tome 20, no. 2 (1978-1979), exp. no. 32, 1--19, Also published in Enseign. Math. (2), 27 (1981), no. 1-2, 3--27.

\bibitem{jmdk:pl} J.-M. De Koninck and P. Letendre, \emph{New upper bounds for the number of divisors function}, Colloq. Math. (online, 2020).

\bibitem{jln:gr} J.-L. Nicolas and G. Robin, \emph{Majorations explicites pour le nombre de diviseurs de $n$}, Canad. Math. Bull. Vol. 26 (1983), no. 4, 485--492.

\bibitem{sr} S. Ramanujan, \emph{Highly composite numbers}, Proc. London Math. Soc. (2) 14 (1915), 347--409.

\bibitem{gr} G. Robin, \emph{Estimation de la fonction de Tchebychef $\theta$ sur le $k$-ième nombre premier et grandes valeurs de la fonction $\omega(n)$ nombre de diviseurs premiers de $n$}, Acta Arith. 42 (1983), 367--389.

\bibitem {jbr} J. B. Rosser, \emph{The $n$-th prime is greater than $n\log n$}, Proc Lond. Math. Soc. (2), vol. 45 (1939), 21--44.

\bibitem{sw} S. Wigert, \emph{Sur l'ordre grandeur du nombre de diviseurs d'un entier}, Ark. Mat. 3, no. 18 (1907), 1--9.


\end{thebibliography}
\end{document}